 \newtheorem{thm}{Theorem}[section]
 \newtheorem{cor}[thm]{Corollary}
 \newtheorem{lem}[thm]{Lemma}
 \newtheorem{prop}[thm]{Proposition}
 \newtheorem{defn}[thm]{Definition}
 \newtheorem{exam}[thm]{Example}
 \theoremstyle{definition}
 \theoremstyle{remark}
 \numberwithin{equation}{section}
\begin{document}

\title[subspace- diskcyclic sequences of linear operators]
{subspace- diskcyclic sequences of linear operators  }

\author{ M. R. Azimi }
\address{M. R. Azimi }
\email{mhr.azimi@maragheh.ac.ir}

\address{department of mathematics, faculty of science, university of maragheh, 55181-83111 golshahr, maragheh, iran}

\thanks{}

\subjclass[2000]{Primary 47A16; Secondary 47B37.}

\keywords{Diskcyclic sequences, diskcyclic vectors,
subspace-diskcyclicity, subspace-hypercyclicity.}

\date{}

\dedicatory{}

\commby{}


\begin{abstract}
A sequence $\{T_n\}_{n=1}^{\infty}$  of bounded linear operators
between separable Banach spaces $X, Y$ is called diskcyclic if there
exists a vector $x\in X$ such that the disk-scaled orbit  $\{\alpha
T_n x: n\in \mathbb{N}, \alpha \in\mathbb{C}, | \alpha | \leq 1\}$
is dense in $Y$. In the first section of this paper we study some
conditions that imply the diskcyclicity of $\{T_n\}_{n=1}^{\infty}$.
 In particular, a sequence $\{T_n\}_{n=1}^{\infty}$  of bounded linear
 operators on separable infinite dimensional Hilbert space
 $\mathcal{H}$ is called subspace-diskcyclic with respect to the
 closed subspace $M\subseteq \mathcal{H},$ if there
exists a vector $x\in \mathcal{H}$ such that the disk-scaled orbit
$\{\alpha T_n x: n\in \mathbb{N}, \alpha \in\mathbb{C}, | \alpha |
\leq 1\}\cap M$ is dense in $M$. In the second section we survey
some conditions and subspace-diskcyclicity criterion (analogue the
results obtained by the some  mathematicians
 in \cite{MR2261697,
MR2720700, MR1111569}) which are sufficient for the sequence
$\{T_n\}_{n=1}^{\infty}$ to be subspace-diskcyclic.
\end{abstract}

\maketitle
\section{\textbf{Introduction and Preliminaries}}
Let $X$ and $Y$  be separable Banach spaces. The set of all bounded
linear operators from $X$ to $Y$ is denoted by $\mathcal{L}(X, Y)$.
A sequence of operators $\{T_n\}_{n=1}^{\infty}\subseteq
\mathcal{L}(X, Y)$ is called hypercyclic if there exists a vector
$x\in X$ such that the set $\{T_n x: n\in \mathbb{N}\}$ is dense in
$Y$. Such a vector is called hypercyclic vector for the sequence of
operators $\{T_n\}_{n=1}^{\infty}$. We say that an operator
$T:X\rightarrow X$ is hypercyclic if the sequence of its iterates
$\{T^n\}_{n=1}^{\infty}$ is hypercyclic. Over the last two decades
hypercyclic operators have been widely studied. A good survey of
hypercyclic operators is the recent book \cite{MR2533318}.
Furthermore some survey articles such as \cite{MR2281650},
\cite{MR1111569}, \cite{MR1148021} and  \cite{MR1685272} are
important references in this subject. The hypercyclicity of sequence
of linear operators and its relevant criteria have been studied in
\cite{MR2261697}, \cite{MR1980114} and \cite{MR1111569}. In
\cite{MR2720700}, B.F. Madore and R.A. Martinez-Avendano introduced
the concept of subspace-hypercyclicity for a bounded linear operator
$T$ defined on Hilbert space $\mathcal{H}$ and they proved a
Kitai-like criterion that implies subspace-hypercyclicity.
\\Indeed, all these motivated us to study diskcyclicity and
subspace-diskcyclicity of sequences of linear operators.\\ Let
$\mathcal{H}$ be a separable infinite dimensional Hilbert space over
the field of complex numbers $\mathbb{C}$ and let $M$ be a closed
subspace of $\mathcal{H}$. A sequence of  operators
$\{T_n\}_{n=1}^{\infty}\subseteq \mathcal{L}(\mathcal{H})$ is called
subspace-diskcyclic if there exists a vector $x\in X$ such that the
intersection of  disk scaled orbit of $\{T_n\}_{n=1}^{\infty}$ and
$M$, $$\{\alpha T_n x: n\in \mathbb{N}, \alpha \in\mathbb{C}, |
\alpha | \leq 1\}\cap M$$ is dense in $M$. Such a vector is called
subspace-diskcyclic vector for the sequence of operators
$\{T_n\}_{n=1}^{\infty}$ with respect to $M$. The set of all
subspace-diskcyclic vectors for $\{T_n\}_{n=1}^{\infty}$ is denoted
by $DC(\{T_n\}_{n=1}^{\infty}, M)$. In particular we say that an
operator $T:X\rightarrow X$ is subspace-diskcyclic for some $M
\subseteq \mathcal{H}$ if the sequence $\{T^n\}_{n=1}^{\infty}$ is
subspace-diskcyclic for $M$. Although the notion of
subspace-diskcyclicity can be defined between different separable
Banach spaces, nevertheless we prefer to deal with the Hilbert
space. Note that if the operator $T$ is hypercyclic then the
underlying Banach space $X$ should be separable. In \cite{MR2533318}
it is shown that an operator $T: X\rightarrow X$ is hypercyclic if
and only if it is topologically transitive i.e., for any pair $U, V$
of nonempty open subsets of $X$  there exists  $n\in \mathbb{N} $
such that $T_n(\alpha U)\cap V\neq \emptyset.$ In the first section
of this paper we define the notion of the topologically transitivity
for the sequences of operators $\{T_n\}_{n=1}^{\infty}$ and then we
show that this is a necessary and sufficient condition for
$\{T_n\}_{n=1}^{\infty}$ to be hypercyclic. Many criteria for
hypercyclicity of $\{T_n\}_{n=1}^{\infty}$ have been studied in
\cite{MR2261697}, \cite{MR2632793}, \cite{MR1980114} and
\cite{MR1111569}.
\\In section 3 we introduce the concept of subspace-disk topologically
transitivity for the $\{T_n\}_{n=1}^{\infty}$ and then it shall be
shown that $\{T_n\}_{n=1}^{\infty}$ is subspace-diskcyclic if and
only if it is subspace-disk topologically transitive. In addition
some necessary and sufficient conditions, criterion and other
properties concerning  the subspace-diskcyclicity of sequences of
linear operators $\{T_n\}_{n=1}^{\infty}$ are studied.



\section{\textbf{Diskcyclic sequences of linear operators }}
\begin{defn}
A sequence of operators $\{T_n\}_{n=1}^{\infty}\subseteq
\mathcal{L}(X, Y)$ is called disk topologically transitive if for
any pair $U, V$ of nonempty open subsets of $X$ and $Y$
respectively, there are $n\in \mathbb{N} \ \mbox{and} \  \alpha
\in\mathbb{C}, 0<| \alpha | \leq 1$ such that
$$T_n(\alpha U)\cap V\neq \emptyset.$$
\end{defn}
\begin{lem}
Let $\{T_n\}_{n=1}^{\infty}\subseteq \mathcal{L}(X, Y)$. Then
$$DC(\{T_n\}_{n=1}^{\infty})= \bigcap_{k=1}^{\infty}
\bigcup_{n=1}^{\infty} \bigcup_{\alpha \in \mathbb{C}, |\alpha|\geq
1} T_n^{-1}(\alpha V_k)$$ where $\{V_k\}$ is a countable open basis
for $Y$.
\end{lem}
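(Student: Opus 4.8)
The plan is to unwind the definition of a diskcyclic vector against the countable basis $\{V_k\}$ and then convert the scaled-orbit membership into a statement about preimages through a reciprocal change of the scalar. By definition $x\in DC(\{T_n\}_{n=1}^{\infty})$ precisely when the disk-scaled orbit $\{\alpha T_n x:n\in\mathbb{N},\ |\alpha|\le1\}$ is dense in $Y$. Since $\{V_k\}$ is a countable open basis, density is equivalent to the orbit meeting every $V_k$; that is, for each $k$ there exist $n\in\mathbb{N}$ and a scalar $\alpha$ with $|\alpha|\le1$ such that $\alpha T_n x\in V_k$. First I would record this reformulation, which reduces the whole claim to bookkeeping over the indices $k,n$ and the scalar $\alpha$, and I would note that the value $\alpha=0$ may be discarded (this is the point deferred to the last paragraph), so that it suffices to range over $0<|\alpha|\le1$.

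Next I would perform the change of variable $\beta=1/\alpha$. The map $\alpha\mapsto1/\alpha$ is a bijection of $\{\alpha\in\mathbb{C}:0<|\alpha|\le1\}$ onto $\{\beta\in\mathbb{C}:|\beta|\ge1\}$, and for such scalars one has the chain of equivalences
$$\alpha T_n x\in V_k \iff T_n x\in\tfrac1\alpha V_k=\beta V_k \iff x\in T_n^{-1}(\beta V_k).$$
Hence the statement ``there exist $n$ and $\alpha$ with $0<|\alpha|\le1$ and $\alpha T_n x\in V_k$'' is literally the same as ``there exist $n$ and $\beta$ with $|\beta|\ge1$ and $x\in T_n^{-1}(\beta V_k)$'', i.e. $x\in\bigcup_{n}\bigcup_{|\beta|\ge1}T_n^{-1}(\beta V_k)$. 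Quantifying over all $k$ then yields exactly the stated intersection, and both containments follow at once once these two membership conditions are identified. As a byproduct each $T_n^{-1}(\beta V_k)$ is open, since the $T_n$ are continuous and scaling by $\beta\ne0$ is a homeomorphism; thus the formula exhibits $DC(\{T_n\}_{n=1}^{\infty})$ as a $G_\delta$ set, which is presumably the intended use.

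The one point that needs care, and the only real obstacle, is the role of the scalar $\alpha=0$: it is permitted in the orbit but excluded by the reciprocal substitution. I would dispose of it by observing that $\alpha=0$ contributes only the single vector $0$, and in a nontrivial Banach space $Y$ the set $Y\setminus\{0\}$ is dense, so the full closed-disk orbit is dense if and only if its puncture (the part with $0<|\alpha|\le1$) is dense. Concretely, if some nonempty $V_k$ were met by the orbit only through $\alpha=0$, then the punctured orbit would lie in the closed set $Y\setminus V_k$, forcing $\overline{\{\alpha T_n x\}}\subseteq(Y\setminus V_k)\cup\{0\}\ne Y$, since a nonempty open $V_k$ is never contained in $\{0\}$; this contradiction shows the punctured orbit already meets every $V_k$. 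With this observation the reduction used in the first paragraph is justified, and the identification of membership conditions in the second paragraph completes the proof.
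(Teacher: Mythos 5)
Your proof is correct and follows essentially the same route as the paper: test density of the disk-scaled orbit against the countable basis $\{V_k\}$ and convert $\alpha T_n x\in V_k$ into $x\in T_n^{-1}(\beta V_k)$ via the reciprocal substitution $\beta=1/\alpha$. You are in fact more careful than the paper, whose two-line proof performs that substitution silently and never addresses the scalar $\alpha=0$; your observation that the punctured orbit is dense if and only if the full orbit is dense patches a minor gap that the published argument simply glosses over.
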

\begin{proof}
Note that  $x\in DC(\{T_n\}_{n=1}^{\infty})$ if and only if for each
$k\in \mathbb{N}$, there exist $n\in \mathbb{N} \ \mbox{and}\ \alpha
\in\mathbb{C},\  | \alpha | \leq 1$ such that $\alpha T_n x\in V_k$
or $x\in T^{-1}_n(\alpha V_k)$. This occurs if and only if $x\in
\bigcap_{k=1}^{\infty} \bigcup_{n=1}^{\infty} \bigcup_{\alpha \in
\mathbb{C}, |\alpha|\geq 1} T_n^{-1}(\alpha V_k).$  Hence the set of
all diskcyclic vectors for $\{T_n\}_{n=1}^{\infty}$ is a $G_\delta$
set.
\end{proof}
\begin{lem}\label{l1}
A sequence of operators $\{T_n\}_{n=1}^{\infty}\subseteq
\mathcal{L}(X, Y)$ is diskcyclic if and only if it is disk
topologically transitive.
\end{lem}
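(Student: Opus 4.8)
The plan is to follow the Birkhoff transitivity scheme, combining the $G_\delta$-decomposition of $DC(\{T_n\}_{n=1}^{\infty})$ recorded in the preceding lemma with the Baire category theorem. Write $G_k=\bigcup_{n=1}^{\infty}\bigcup_{|\alpha|\geq 1}T_n^{-1}(\alpha V_k)$, where $\{V_k\}$ is the fixed countable open basis for $Y$, so that $DC(\{T_n\}_{n=1}^{\infty})=\bigcap_{k=1}^{\infty}G_k$. Each $G_k$ is open, since every $T_n$ is continuous and each $\alpha V_k$ with $\alpha\neq 0$ is open. The whole argument then reduces to the single assertion that $\{T_n\}_{n=1}^{\infty}$ is disk topologically transitive if and only if every $G_k$ is dense in $X$; granting this, Baire's theorem yields the backward implication at once, while the forward implication is the delicate point discussed below.

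For the backward implication I would first translate transitivity into density of the sets $G_k$. Using the linearity identity $T_n(\alpha U)=\alpha T_n(U)$ together with the substitution $\beta=\alpha^{-1}$, the condition $W\cap G_k\neq\emptyset$ for a given nonempty open $W\subseteq X$ is exactly the existence of $n$ and $\beta$ with $0<|\beta|\leq 1$ and $T_n(\beta W)\cap V_k\neq\emptyset$. Since $\{V_k\}$ is a basis, quantifying this over all such $W$ and all $k$ is equivalent to the defining condition of disk topological transitivity. Hence transitivity forces each open set $G_k$ to be dense, and by the Baire category theorem $DC(\{T_n\}_{n=1}^{\infty})=\bigcap_k G_k$ is a dense $G_\delta$; in particular it is nonempty, so a diskcyclic vector exists and $\{T_n\}_{n=1}^{\infty}$ is diskcyclic.

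The forward implication is where the main obstacle lies. The Baire argument above in fact establishes the sharper equivalence that $\{T_n\}_{n=1}^{\infty}$ is disk topologically transitive if and only if $DC(\{T_n\}_{n=1}^{\infty})$ is \emph{dense}, not merely nonempty; so to deduce transitivity from the existence of a single diskcyclic vector one must upgrade ``nonempty'' to ``dense.'' For the iterates of one operator this is automatic, because $\{\alpha T^{m+n}x\}$ is a tail of the dense disk-orbit of $x$ and removing finitely many of the (nowhere dense) disks $\mathbb{C}\,T^{j}x$ preserves density, so every $T^{m}x$ is again diskcyclic; for a general sequence there is no composition $T_{m+n}=T_n\circ T_m$ to exploit, and this is precisely the step I expect to be hardest. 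The route I would take is to verify transitivity directly from a fixed diskcyclic vector $x$: given nonempty open $U$ and $V\supseteq V_k$, use density of the disk-orbit of $x$ to locate an index and a scalar placing a point of the orbit in $U$, and a second index placing one in $V$, then combine them while respecting the constraint $0<|\alpha|\leq 1$. Controlling the ratio of the two disk parameters so that the resulting scalar still lies in the closed unit disk is the genuinely delicate point, mirroring the well-known gap between diskcyclicity and hypercyclicity, and it is here that any extra structure on $\{T_n\}_{n=1}^{\infty}$, or a separate density argument for $DC(\{T_n\}_{n=1}^{\infty})$, would be required.
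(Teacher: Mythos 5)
Your backward implication (disk transitivity $\Rightarrow$ diskcyclicity) is exactly the paper's argument: with $G_k=\bigcup_{n=1}^{\infty}\bigcup_{|\alpha|\geq 1}T_n^{-1}(\alpha V_k)$ open, transitivity is equivalent (via $T_n(\alpha U)=\alpha T_n(U)$ and the substitution $\beta=1/\alpha$) to density of every $G_k$, and Baire's theorem makes $DC(\{T_n\}_{n=1}^{\infty})=\bigcap_k G_k$ a dense $G_\delta$, in particular nonempty. That half is correct and needs no changes.

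The forward implication, where your proposal stops, is precisely where the paper's own proof is defective: the paper disposes of it with the sentence that ``it is easy to check'' that diskcyclicity of the sequence forces $DC(\{T_n\}_{n=1}^{\infty})$ to be dense, and then runs exactly the argument you sketch (pick a diskcyclic $x\in U$, use density of its disk orbit to hit $V$). No justification of the density claim is given, and your diagnosis of why none is available is right: from one diskcyclic vector $x$ one gets for free only that the nonzero multiples $cx$ are diskcyclic (their disk orbit is $c$ times a dense set), a punctured line, which is nowhere dense; the single-operator argument via tails of the orbit uses the semigroup law $T^{n}T^{k}=T^{n+k}$, which an abstract sequence lacks. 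In fact the implication is false in general. On $\ell^2$ with orthonormal basis $\{e_j\}$, take a dense sequence $\{y_n\}$ enumerated so that $\|y_n\|\leq\sqrt{n}$, and set $T_nz=\langle z,e_1\rangle y_n+n\langle z,e_2\rangle e_1$. Then $T_ne_1=y_n$, so $e_1$ is diskcyclic (even hypercyclic); but if $\langle z,e_2\rangle\neq 0$, a norm estimate on the $e_1$-component shows that all but finitely many of the disks $\{\alpha T_nz:|\alpha|\leq 1\}$ stay at a fixed positive distance from the unit sphere of $\{e_1\}^{\perp}$, while the finitely many remaining disks lie in finitely many complex lines; hence the disk orbit of $z$ is not dense. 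Thus $DC$ is contained in the hyperplane $\langle z,e_2\rangle=0$, is not dense, and by your own Baire argument the sequence cannot be disk topologically transitive. So the gap you flagged is not a defect of your write-up alone: it is a genuine gap in the paper's lemma, which is only correct in the form ``disk topologically transitive $\Leftrightarrow$ $DC(\{T_n\}_{n=1}^{\infty})$ is a dense $G_\delta$,'' or under extra hypotheses on the sequence (for instance, for the iterates $T_n=T^n$ of a single operator).
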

\begin{proof}
Choose open subsets $U\subseteq X$ and $V\subseteq Y$ arbitrarily.
It is easy to check that if a sequence $\{T_n\}_{n=1}^{\infty})$ is
diskcyclic then $DC(\{T_n\}_{n=1}^{\infty})$ is dense in $X$. Hence
for any open subset $U$ of $X$ we have $$U \cap
DC(\{T_n\}_{n=1}^{\infty})\neq \emptyset .$$ Pick $x\in U \cap
DC(\{T_n\}_{n=1}^{\infty})\neq \emptyset.$ Then the set $\{\alpha
T_n x: n\in \mathbb{N}, \alpha \in\mathbb{C}, | \alpha | \leq 1\}$
is dense in $Y$ and so $\{\alpha T_n x: n\in \mathbb{N}, \alpha
\in\mathbb{C}, | \alpha | \leq 1\}\cap V\neq \emptyset .$  Thus,
$\alpha T_n x\in V$ for some $ n\in \mathbb{N}$ and $\alpha
\in\mathbb{C}, | \alpha | \leq 1$. Therefore $T_n(\alpha U)\cap
V\neq \emptyset .$\\
Conversely suppose that the sequence
$\{T_n\}_{n=1}^{\infty}\subseteq $ is disk topologically transitive.
 By Bair's category theorem and Lemma \ref{l1},
 $DC(\{T_n\}_{n=1}^{\infty})$ is dense $X$ if and only if every open
 set $W_k= \bigcup_{n=1}^{\infty} \bigcup_{\alpha \in \mathbb{C}, |\alpha|\geq
1} T_n^{-1}(\alpha V_k)$ is dense in $X$. Indeed, for every nonempty
open subset $U$ of $X$, there are $ n\in \mathbb{N}$ and $\alpha
\in\mathbb{C}, | \alpha | \geq 1$ such that $U\cap T^{-1}_n(\alpha
V_k)\neq \emptyset$, equivalently $T_n(\frac{1}{\alpha}U)\cap
V_k\neq \emptyset$ . This completes the proof.
\end{proof}
\begin{prop}
Let $\{T_n\}_{n=1}^{\infty}\subseteq \mathcal{L}(X, Y)$ be a
sequences of operators. The following conditions are equivalent:
\begin{itemize}
  \item [(i)] The sequence $\{T_n\}_{n=1}^{\infty}$ is disk
  topologically transitive;
  \item[(ii)] For each nonempty open subset $U$ of $X$ there are $\alpha \in \mathbb{C}, 0<|\alpha|
  \leq 1$ such that $\bigcup_{n=1}^{\infty}\bigcup_{|\alpha| \leq 1}
  T_n(\alpha
U)$ is dense in $Y$;
  \item [(iii)]For each nonempty open subset $V$ of $Y$ there are $\alpha \in \mathbb{C}, |\alpha|
  \geq 1$ such that $\bigcup_{n=1}^{\infty}\bigcup_{|\alpha| \geq 1}
  T_n^{-1}(\alpha V)$ is dense in $X$;
  \item [(iv)] For each $x\in X,$ $y\in Y$ and $\epsilon>0$ there
  exist $n\in
\mathbb{N} \ , \  \alpha \in\mathbb{C}, 0<| \alpha | \leq 1$ and
$u\in X$ such that $\|\alpha u-x\|<\epsilon$ and $\|\alpha T_{n}u
-y\|<\epsilon.$
\end{itemize}
\end{prop}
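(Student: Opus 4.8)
The plan is to prove the equivalence by establishing a cycle of implications, reducing everything to the disk topological transitivity condition of Definition 2.1 and the reformulations already available from Lemma 2.3. The statements (ii) and (iii) are essentially dual descriptions of transitivity, obtained by quantifying over all open sets on one side while absorbing the existential quantifier on the other into the union, and (iv) is a metric unpacking of transitivity phrased directly in terms of vectors and $\epsilon$-balls. I would prove (i) $\Rightarrow$ (iii) $\Rightarrow$ (iv) $\Rightarrow$ (i), and handle (i) $\Leftrightarrow$ (ii) separately by the same kind of translation.

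First, for (i) $\Rightarrow$ (iii): fix a nonempty open $V\subseteq Y$ and an arbitrary nonempty open $U\subseteq X$. By disk topological transitivity there exist $n$ and $\alpha$ with $0<|\alpha|\le 1$ such that $T_n(\alpha U)\cap V\neq\emptyset$, which is exactly the statement $U\cap T_n^{-1}(\tfrac{1}{\alpha}V)\neq\emptyset$; writing $\beta=\tfrac{1}{\alpha}$ gives $|\beta|\ge 1$ and $U\cap T_n^{-1}(\beta V)\neq\emptyset$. Since $U$ was arbitrary, the set $\bigcup_{n=1}^{\infty}\bigcup_{|\alpha|\ge 1}T_n^{-1}(\alpha V)$ meets every nonempty open subset of $X$ and is therefore dense in $X$. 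The converse direction (iii) $\Rightarrow$ (i) runs the same chain of equivalences backwards, so in fact (i) and (iii) are directly interchangeable; the corresponding argument with the roles of $U,V$ swapped and $|\alpha|\le 1$ gives (i) $\Leftrightarrow$ (ii).

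Next, for (iii) $\Rightarrow$ (iv): given $x\in X$, $y\in Y$ and $\epsilon>0$, take $V=B(y,\epsilon)$ and $U=B(x,\epsilon)$ as the open balls of radius $\epsilon$. Density of $\bigcup_{n}\bigcup_{|\alpha|\ge 1}T_n^{-1}(\alpha V)$ forces this union to meet $U$, so there are $n$ and $\beta$ with $|\beta|\ge 1$ and a point $w\in U\cap T_n^{-1}(\beta V)$. Then $T_n w\in \beta V$, equivalently $\tfrac{1}{\beta}T_n w\in V$, so setting $\alpha=\tfrac{1}{\beta}$ with $0<|\alpha|\le 1$ and $u=\tfrac{1}{\alpha}w=\beta w$ yields $\alpha u=w\in B(x,\epsilon)$ and $\alpha T_n u=\tfrac{1}{\beta}T_n w\in B(y,\epsilon)$, which is precisely the claim $\|\alpha u-x\|<\epsilon$ and $\|\alpha T_n u-y\|<\epsilon$. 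Finally (iv) $\Rightarrow$ (i) reverses this: given open sets $U,V$, pick $x\in U$, $y\in V$ and $\epsilon$ small enough that $B(x,\epsilon)\subseteq U$ and $B(y,\epsilon)\subseteq V$, apply (iv) to produce $n,\alpha,u$, and observe that $\alpha u\in U$ while $\alpha T_n u=T_n(\alpha u)\in V$, so $T_n(\alpha U)\cap V\neq\emptyset$.

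The arguments are all routine translations between the open-set formulation and the metric formulation, so the only point that requires care is the bookkeeping of the scalar: the transitivity definition uses $0<|\alpha|\le 1$ with $T_n(\alpha U)$, whereas (iii) uses $|\alpha|\ge 1$ with $T_n^{-1}(\alpha V)$, and one must consistently pass between $\alpha$ and $\tfrac{1}{\alpha}$ while keeping the excluded value $\alpha=0$ in mind. I expect the main (mild) obstacle to be ensuring that the scalar $\alpha$ never vanishes throughout these inversions, so that $\tfrac{1}{\alpha}$ is always well defined and the inequalities on $|\alpha|$ flip correctly; once that is tracked carefully, each implication follows immediately from the definitions.
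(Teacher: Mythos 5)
Your handling of (i), (ii), (iii) is correct and is essentially the paper's own argument: the paper proves the cycle (i) $\Rightarrow$ (ii) $\Rightarrow$ (iii) $\Rightarrow$ (i) by exactly the translations $T_n(\alpha U)\cap V\neq\emptyset \Leftrightarrow U\cap T_n^{-1}(\frac{1}{\alpha}V)\neq\emptyset$ that you use. Note, however, that the paper's proof of everything involving (iv) is literally left blank (the proof ends with two empty placeholders), so on that part you were genuinely on your own --- and that is where the problem lies.

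The gap is in your step (iii) $\Rightarrow$ (iv). You set $\alpha=\frac{1}{\beta}$ and $u=\beta w$, and then claim $\alpha T_n u=\frac{1}{\beta}T_n w$. By linearity, $\alpha T_n u=\frac{1}{\beta}T_n(\beta w)=T_n w$, not $\frac{1}{\beta}T_n w$; what you actually know is $T_n w\in\beta V$, i.e. $\frac{1}{\beta}T_n w\in B(y,\epsilon)$, which says nothing about $T_n w$ itself when $|\beta|$ is large. This is not a repairable slip, because the same scalar $\alpha$ multiplies both $u$ and $T_n u$ in (iv): substituting $w=\alpha u$ (legitimate since $\alpha\neq 0$), condition (iv) is verbatim equivalent to ``for all $x,y,\epsilon$ there exist $n$ and $w$ with $\|w-x\|<\epsilon$ and $\|T_n w-y\|<\epsilon$'', i.e. to ordinary topological transitivity with no disk scalar at all. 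That is strictly stronger than (i)--(iii): take $X=Y=\mathbb{C}$ and $T_n=2^n I$; then (i)--(iii) hold (given nonempty open $U,V$, pick nonzero $z\in U$, $v\in V$, choose $n$ with $2^n|z|\geq |v|$ and $\alpha=v/(2^n z)$, so $0<|\alpha|\leq 1$ and $T_n(\alpha z)=v$), but (iv) fails, since no $2^n w$ with $|w-1|<0.1$ can satisfy $|2^n w-\frac{1}{2}|<0.1$. So the implication you are trying to prove is false as stated; (iv) is presumably a misprint for $\|u-x\|<\epsilon$ (the standard form of the disk-transitivity criterion), and with that correction your argument works by taking $u=w$ and $\alpha=\frac{1}{\beta}$. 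A smaller remark on your (iv) $\Rightarrow$ (i): what your argument produces is the point $\alpha u\in U$ with $T_n(\alpha u)\in V$, i.e. $T_n(U)\cap V\neq\emptyset$, which is transitivity witnessed by the scalar $1$ rather than by $\alpha$; that still establishes (i), but the conclusion should be stated that way.
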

\begin{proof}
$(i)\Rightarrow (ii):$ Let $U$ be an arbitrary nonempty open subset
of $X$. By $(i)$ there exist $n\in \mathbb{N} \ \mbox{and} \  \alpha
\in\mathbb{C}, 0<| \alpha | \leq 1$ such that $T_n({\alpha}U)\cap
V\neq \emptyset$ for any nonempty open subset $V$ of $Y$. Hence
$(ii)$ is established.
\\$(ii)\Rightarrow(iii):$ Let $U\subseteq X$ and $V\subseteq Y$ be
any nonempty open subsets. By $(ii)$ there are $\alpha
\in\mathbb{C}, 0<| \alpha | \leq 1$ such that
$$\bigcup_{n=1}^{\infty}\bigcup_{|\alpha| \leq 1}
  T_n(\alpha U) \bigcap V\neq \emptyset.$$
Thus, there exist $n\in \mathbb{N} \ \mbox{and} \  \alpha
\in\mathbb{C}, 0<| \alpha | \leq 1$ such that $T_n({\alpha}U)\cap
V\neq \emptyset$. So $T^{-1}_n(\frac{1}{\alpha}V)\cap U\neq
\emptyset$ which implies that
$\bigcup_{n=1}^{\infty}\bigcup_{|\alpha| \geq 1}
  T_n^{-1}(\alpha V)$ is dense in $X$, since $U$ was chosen
  arbitrary.
  \\$(iii)\Rightarrow(i):$ We have  $\bigcup_{n=1}^{\infty}\bigcup_{|\alpha| \geq 1}
  T_n^{-1}(\alpha V)\bigcap U\neq \emptyset$  for every nonempty
  open subset $U$ of $X$. Therefore, $U\cap T^{-1}_n(\alpha
V)\neq \emptyset$ or  $T_n(\frac{1}{\alpha}U)\cap V\neq \emptyset$
for some $n\in \mathbb{N} \ \mbox{and} \  \alpha \in\mathbb{C}, |
\alpha | \geq 1$ and the  sequence $\{T_n\}_{n=1}^{\infty}$ is disk
  topologically transitive.
\\$(iv)\Rightarrow():$
\\$()\Rightarrow(iv):$
\end{proof}
\begin{lem}
Let $\{T_n\}_{n=1}^{\infty}\subseteq \mathcal{L}(X, Y)$ and $c_n\geq
0$ for $n=1, 2, ...$. If $\{c_n T_n\}_{n=1}^{\infty}$ is diskcyclic
then the sequence $\{k_n T_n\}_{n=1}^{\infty}$ is diskcyclic for all
$k_n\geq c_n.$
\end{lem}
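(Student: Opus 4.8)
The plan is to show that the \emph{same} vector $x$ which witnesses the diskcyclicity of $\{c_n T_n\}_{n=1}^{\infty}$ is already a diskcyclic vector for $\{k_n T_n\}_{n=1}^{\infty}$. Concretely, I would prove that the disk-scaled orbit of $\{c_n T_n\}$ at $x$ is contained in the disk-scaled orbit of $\{k_n T_n\}$ at $x$, and then use the trivial topological fact that a set containing a dense subset is itself dense. So I would start by fixing $x\in DC(\{c_n T_n\}_{n=1}^{\infty})$, so that
$$O_c:=\{\alpha c_n T_n x: n\in\mathbb{N},\ \alpha\in\mathbb{C},\ |\alpha|\leq 1\}$$
is dense in $Y$, and I would aim to show $O_c\subseteq O_k$, where
$$O_k:=\{\beta k_n T_n x: n\in\mathbb{N},\ \beta\in\mathbb{C},\ |\beta|\leq 1\}.$$

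The key step is an elementary rescaling of the disk coefficient. Fix $n$ and $\alpha$ with $|\alpha|\leq 1$. If $k_n>0$, I would set $\beta=\alpha c_n/k_n$; since $0\leq c_n\leq k_n$ we get $|\beta|=|\alpha|\,c_n/k_n\leq c_n/k_n\leq 1$, while $\beta k_n T_n x=\alpha c_n T_n x$, so the given point of $O_c$ lies in $O_k$. The only degenerate situation is $k_n=0$, which forces $c_n=0$ as well (because $0\leq c_n\leq k_n$); then $\alpha c_n T_n x=0=0\cdot k_n T_n x\in O_k$. In either case the point belongs to $O_k$, yielding $O_c\subseteq O_k$. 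Geometrically this is just the observation that the disk of radius $c_n$ is contained in the disk of radius $k_n$ whenever $c_n\leq k_n$, so enlarging the scalar coefficients can only enlarge the orbit.

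Finally, since $O_c$ is dense in $Y$ and $O_c\subseteq O_k\subseteq Y$, the larger set $O_k$ is also dense in $Y$, whence $x$ is a diskcyclic vector for $\{k_n T_n\}_{n=1}^{\infty}$ and the sequence is diskcyclic. I do not expect a genuine obstacle in this argument; the one point requiring mild care is the degenerate case $k_n=0$, which is why I would treat it separately in the rescaling step rather than dividing by $k_n$ blindly.
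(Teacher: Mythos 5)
Your proposal is correct and follows essentially the same route as the paper: fix the diskcyclic vector $x$ for $\{c_n T_n\}$, show the orbit inclusion by rewriting $\alpha c_n T_n x = (\alpha c_n/k_n) k_n T_n x$ with $|\alpha c_n/k_n|\leq 1$, and conclude density of the larger orbit. The only difference is cosmetic: the paper disposes of the case $k_n=0$ with a ``without loss of generality,'' whereas you handle it explicitly (noting it forces $c_n=0$), which is if anything a slight improvement in rigor.
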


\begin{proof}
Without loss of generality we may assume that $k_n>0$ for each $n\in
\mathbb{N}.$ Let $x$ be a diskcyclic vector for $\{c_n
T_n\}_{n=1}^{\infty}$. It is enough to show that$$\{\alpha c_nT_n x:
n\in \mathbb{N}, \alpha \in\mathbb{C}, | \alpha | \leq 1\}\subseteq
\{\alpha k_nT_n x: n\in \mathbb{N}, \alpha \in\mathbb{C}, | \alpha |
\leq 1\}.$$ Take $y\in \{\alpha c_nT_n x: n\in \mathbb{N}, \alpha
\in\mathbb{C}, | \alpha | \leq 1\}.$ Then there exist $n\in
\mathbb{N}$ and $\alpha \in\mathbb{C}, | \alpha | \leq 1$ such that
$y=\alpha c_nT_n x.$ One may write $y=\alpha \frac{c_n}{k_n}k_nT_n
x=\alpha'k_nT_n x$ where $\alpha \in\mathbb{C}, | \alpha' | \leq 1$.
This follows that $y\in \{\alpha k_nT_n x: n\in \mathbb{N}, \alpha
\in\mathbb{C}, | \alpha | \leq 1\}.$
\end{proof}
\section{\textbf{Subspace-diskcyclic sequences of linear operators }}
From now on $\mathcal{H}$ denotes a separable infinite dimensional
Hilbert space over the field of complex numbers $\mathbb{C}$.
\begin{defn}
Let $M$ be a nontrivial closed subspace of $\mathcal{H}$. A sequence
$\{T_n\}_{n=1}^{\infty}\subseteq \mathcal{L}(\mathcal{H})$ is called
\emph{subspace-diskcyclic} sequence of linear operators for $M$ if
there exists $x\in \mathcal{H}$ such that the set $$\{\alpha T_n x:
n\in \mathbb{N} \ , \  \alpha \in\mathbb{C}, 0<| \alpha | \leq
1\}\cap M$$ is dense in $M$. We call $x$ a
\emph{subspace-diskcyclic} vector for $\{T_n\}_{n=1}^{\infty}$. The
set of all subspace-diskcyclic vectors for $\{T_n\}_{n=1}^{\infty}$
in a subspace $M$ is denoted by $DC(\{T_n\}_{n=1}^{\infty}, M)$.
\end{defn}
\begin{exam}
One may consider that the subspace-diskcyclicity  does not imply
diskcyclicity in general. Let $\{T_n\}_{n=1}^{\infty}\subseteq
\mathcal{L}(\mathcal{H})$ be a diskcyclic sequence with the
diskcyclic vector $x$ and let $I$ be the indentity operator on
$\mathcal{H}$. Then the sequence
 $\{T_n\oplus I: \mathcal{H}\oplus \mathcal{H}\rightarrow \mathcal{H}\oplus \mathcal{H}\}_{n=1}^{\infty}$
  is subspace-diskcyclic for the subspace $M=\mathcal{H}\oplus \{0\}$ with the
  subspace-diskcyclic vector $x\oplus 0,$ while
  $\{T_n\oplus I\}_{n=1}^{\infty}$
   is not diskcyclic.
\end{exam}
\begin{thm}\label{T2}
Let $\{T_n\}_{n=1}^{\infty}\subseteq \mathcal{L}(\mathcal{H})$ and
let $M$ be a nontrivial subspace of $\mathcal{H}$. Then
$DC(\{T_n\}_{n=1}^{\infty}, M)= \bigcap_{k=1}^{\infty}
\bigcup_{|\alpha|\geq 1}\bigcup_{n=1}^{\infty} T_n^{-1}(\alpha B_k)$
where $\{B_k\}_{k=1}^{\infty}$ is a countable open basis for the
relatively topology of $M$ as a subspace of $\mathcal{H}$.
\end{thm}
\begin{proof}
Note that
\begin{eqnarray}
& & x\in DC(\{T_n\}_{n=1}^{\infty}, M)\nonumber \\
&\Leftrightarrow&  \{\alpha T_n x: n\in \mathbb{N} \ , \  \alpha
\in\mathbb{C}, 0<| \alpha | \leq
1\}\cap M is \  dense\  in\  M \nonumber \\
&\Leftrightarrow& for\ each \ k, \  there \ are \ n\in \mathbb{N} \
, \  \alpha \in \mathbb{C}, 0< | \alpha | \leq
1 \ such\ that\  \alpha T_n x \in B_k \nonumber \\
&\Leftrightarrow& for \ each\ k,\  x\in T^{-1}_n(\frac{1}{\alpha}B_k) \nonumber \\
&\Leftrightarrow& x\in \bigcap_{k=1}^{\infty} \bigcup_{|\alpha|\geq
1}\bigcup_{n=1}^{\infty} T_n^{-1}(\alpha B_k)\nonumber
\end{eqnarray}
\end{proof}
\begin{defn}
Let $\{T_n\}_{n=1}^{\infty}\subseteq \mathcal{L}(\mathcal{H})$ and
let $M$ be a nontrivial subspace of $\mathcal{H}$. We say that a
sequence of linear operators $\{T_n\}_{n=1}^{\infty}$ is
subspace-disk topologically transitive with respect to $M$ if for
all nonempty sets $U\subseteq M$ and $V\subseteq M,$ both relatively
open, there exist $n\in \mathbb{N}$ and $\alpha \in\mathbb{C}, |
\alpha | \geq 1$ such that $T^{-1}_n(\alpha U)\cap V$ contains a
relatively open nonempty subset of $M$.
\end{defn}
\begin{thm}\label{T1}
Let $\{T_n\}_{n=1}^{\infty}\subseteq \mathcal{L}(\mathcal{H})$ be a
sequence of linear operators and let $M$ be a nontrivial subspace of
$\mathcal{H}$. Then the following are equivalent:
\begin{itemize}
  \item [(i)] The sequence of linear operators $\{T_n\}_{n=1}^{\infty}$ is
subspace-disk topologically transitive with respect to $M$;
  \item [(ii)] for all nonempty sets $U\subseteq M$ and $V\subseteq M,$ both relatively
open, there exist $n\in \mathbb{N}$ and $\alpha \in\mathbb{C}, |
\alpha | \geq 1$ such that $T^{-1}_n(\alpha U)\cap V\neq \emptyset$
and $T_n M \subseteq M;$
  \item [(iii)] for all nonempty sets $U\subseteq M$ and $V\subseteq M,$ both relatively
open, there exist $n\in \mathbb{N}$ and $\alpha \in\mathbb{C}, |
\alpha | \geq 1$ such that $T^{-1}_n(\alpha U)\cap V$ is nonempty
open subset of $M$.
\end{itemize}
\end{thm}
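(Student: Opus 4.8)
The plan is to establish the cycle $(i) \Rightarrow (ii) \Rightarrow (iii) \Rightarrow (i)$, since the last implication $(iii) \Rightarrow (i)$ is immediate: if $T_n^{-1}(\alpha U)\cap V$ is itself a nonempty relatively open subset of $M$, then it contains a nonempty relatively open subset of $M$ (namely itself), which is exactly (i). So the work is concentrated in the first two implications.

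For $(i) \Rightarrow (ii)$, fix nonempty relatively open $U, V \subseteq M$ and use (i) to obtain $n$ and $\alpha$ with $|\alpha| \geq 1$ so that $T_n^{-1}(\alpha U)\cap V$ contains a nonempty relatively open set $W \subseteq M$. Nonemptiness of $T_n^{-1}(\alpha U)\cap V$ is then automatic. The real content of (ii) is the hidden invariance $T_n M \subseteq M$, which I would extract as follows. Since $W$ is relatively open in $M$, it contains a relative ball $\{w \in M : \|w - w_0\| < r\}$ for some $w_0 \in M$ and $r > 0$. Because $U \subseteq M$ and $M$ is a subspace, $\alpha U \subseteq M$, so $T_n(W) \subseteq \alpha U \subseteq M$; in particular $T_n w_0 \in M$ and $T_n(w_0 + h) \in M$ for every $h \in M$ with $\|h\| < r$. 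Subtracting gives $T_n h = T_n(w_0 + h) - T_n w_0 \in M$ for all such $h$, and homogeneity of $M$ (closure under scalar multiplication) upgrades this to $T_n m \in M$ for every $m \in M$. Hence $T_n M \subseteq M$, completing (ii).

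For $(ii) \Rightarrow (iii)$, the invariance $T_n M \subseteq M$ lets me view $T_n|_M$ as a bounded linear operator on $M$. For $v \in V \subseteq M$ the condition $v \in T_n^{-1}(\alpha U)$ reads $T_n v \in \alpha U$, and since $T_n v \in M$ this is an intrinsic statement inside $M$; thus $T_n^{-1}(\alpha U)\cap V = (T_n|_M)^{-1}(\alpha U)\cap V$. As $\alpha \neq 0$, multiplication by $\alpha$ is a homeomorphism of $M$, so $\alpha U$ is relatively open; continuity of $T_n|_M$ then makes $(T_n|_M)^{-1}(\alpha U)$ relatively open, and intersecting with the relatively open $V$ preserves openness. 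Combined with the nonemptiness guaranteed by (ii), this is precisely (iii).

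I expect $(i) \Rightarrow (ii)$ to be the main obstacle, specifically the passage from ``$T_n$ carries one relatively open piece of $M$ into $M$'' to the global invariance $T_n M \subseteq M$. The affine argument above is the crux: one must use that the open piece contains a genuine ball (not merely a nonempty set), so that differences of its points fill a neighborhood of $0$ in $M$, and then invoke linearity together with closure of $M$ under scalars. Care is also needed to verify $\alpha U \subseteq M$ so that the image genuinely lands in $M$, which is where both the hypothesis $U \subseteq M$ and the subspace structure of $M$ are used.
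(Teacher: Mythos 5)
Your proof is correct and takes essentially the same approach as the paper: the same cycle $(i)\Rightarrow(ii)\Rightarrow(iii)\Rightarrow(i)$, with the identical affine-perturbation argument for $(i)\Rightarrow(ii)$ (perturb a point of the open set $W$ by small elements of $M$, note the images land in $\alpha U\subseteq M$, subtract, and use closure of $M$ under scalars to get $T_nM\subseteq M$). Your $(ii)\Rightarrow(iii)$ simply writes out the details of what the paper dismisses as obvious from the continuity of $T_n|_M$.
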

\begin{proof}
$(i)\Rightarrow (ii):$ Let  $U\subseteq M$ and $V\subseteq M$
 be nonempty open subsets and let $W$ be the nonempty open subset of
$T^{-1}_n(\alpha U)\cap V$ for some $n\in \mathbb{N}$ and $\alpha
\in\mathbb{C}, | \alpha | \geq 1$. Then $\frac{1}{\alpha}T_n
W\subseteq M.$ Take $x\in M$ and $x_0\in W.$ Since $W$ is open
subset we may claim that $x_0 + rx\in W$ for sufficiently small
$r>0.$ Hence
$$\frac{1}{\alpha}T_n(x_0)+\frac{1}{\alpha}T_n(rx)=\frac{1}{\alpha}T_n(x_0 +rx)\in M.$$
Since $\frac{1}{\alpha}T_n(x_0)\in M$, it is easily inferred that
$T_n(x)\in M$ and the proof is complete.
 \\ The implication $(iii)\Rightarrow (i)$ is obvious.  $(ii)\Rightarrow
 (iii)$ is also obvious, since the sequence of operators
 $\{T_n|_M\}_{n=1}^{\infty}$ is still continuous.
\end{proof}
\begin{cor}
Let $\{T_n\}_{n=1}^{\infty}\subseteq \mathcal{L}(\mathcal{H})$ be a
sequence of linear operators and let $M$ be a nontrivial subspace of
$\mathcal{H}$. Assume that $\{T_n\}_{n=1}^{\infty}$ is subspace-disk
topologically transitive with respect to $M$. Then
$DC(\{T_n\}_{n=1}^{\infty}, M)$ is a dense subset of $M$.
\end{cor}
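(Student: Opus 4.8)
The plan is to realize $DC(\{T_n\}_{n=1}^{\infty}, M)$ as a countable intersection of relatively open dense subsets of $M$ and then invoke the Baire category theorem. First I would observe that, since $M$ is a closed subspace of the complete space $\mathcal{H}$, it is itself a complete separable metric space in the induced metric, hence a Baire space. Fix a countable open basis $\{B_k\}_{k=1}^{\infty}$ for the relative topology of $M$, exactly as in Theorem \ref{T2}.

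Next, by Theorem \ref{T2} we have $DC(\{T_n\}_{n=1}^{\infty}, M)= \bigcap_{k=1}^{\infty} W_k$, where $W_k = \bigcup_{|\alpha|\geq 1}\bigcup_{n=1}^{\infty} T_n^{-1}(\alpha B_k)$. I would set $G_k = W_k \cap M$. Each $G_k$ is a union of sets of the form $T_n^{-1}(\alpha B_k)\cap M$, and each such set is relatively open in $M$ by continuity of $T_n$; hence every $G_k$ is relatively open in $M$, and $\bigcap_{k=1}^{\infty} G_k = DC(\{T_n\}_{n=1}^{\infty}, M)\cap M$.

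The core step is to prove that each $G_k$ is dense in $M$, and this is precisely where the transitivity hypothesis enters. Fix $k$ and let $V\subseteq M$ be an arbitrary nonempty relatively open set. Applying subspace-disk topological transitivity (condition (i) of Theorem \ref{T1}) to the pair $U=B_k$ and $V$, I obtain $n\in\mathbb{N}$ and $\alpha\in\mathbb{C}$ with $|\alpha|\geq 1$ for which $T_n^{-1}(\alpha B_k)\cap V$ contains a nonempty relatively open subset of $M$; in particular it is nonempty. Any point $z$ in this intersection lies in $V\subseteq M$ and in $T_n^{-1}(\alpha B_k)\subseteq W_k$, so $z\in G_k\cap V$. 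Thus $G_k$ meets every nonempty relatively open subset of $M$, i.e. $G_k$ is dense in $M$.

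Finally, since $M$ is a Baire space and each $G_k$ is relatively open and dense in $M$, the set $\bigcap_{k=1}^{\infty} G_k$ is dense in $M$. As this intersection equals $DC(\{T_n\}_{n=1}^{\infty}, M)\cap M$, the subspace-diskcyclic vectors lying in $M$ form a dense subset of $M$, and in particular $DC(\{T_n\}_{n=1}^{\infty}, M)$ is dense in $M$, as claimed. The only nonroutine point is the density of each $G_k$; the rest is the standard translation between the $G_\delta$ description furnished by Theorem \ref{T2} and the Baire argument, in direct parallel with the proof of Lemma \ref{l1}.
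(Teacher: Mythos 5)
Your overall strategy (the $G_\delta$ description from Theorem \ref{T2}, density of each piece via transitivity, then Baire) is the same as the paper's, but one step is incorrectly justified, and it is not cosmetic: the claim that each $G_k = W_k \cap M$ is relatively open in $M$ ``by continuity of $T_n$.'' The set $\alpha B_k$ is relatively open in $M$ but is \emph{not} open in $\mathcal{H}$ (as a subset of a proper closed subspace it has empty interior in $\mathcal{H}$), so continuity of $T_n$ on $\mathcal{H}$ tells you nothing about $T_n^{-1}(\alpha B_k)$. The trace $T_n^{-1}(\alpha B_k)\cap M$ is relatively open in $M$ when $T_n M\subseteq M$ (it is then the preimage of $\alpha B_k$ under the continuous map $T_n|_M\colon M\to M$), but transitivity forces $T_n M\subseteq M$ only for the particular $n$'s it produces --- that is precisely the content of the implication $(i)\Rightarrow(ii)$ in Theorem \ref{T1}, and precisely why the definition of subspace-disk transitivity demands that $T^{-1}_n(\alpha U)\cap V$ \emph{contain} a relatively open set rather than merely be nonempty. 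For the remaining $n$'s in the union defining $W_k$, the set $T_n^{-1}(\alpha B_k)\cap M$ can fail to be open; for instance, if some $T_n$ maps $M$ transversally to itself (say a swap of two orthogonal summands), this trace can be a single point. The distinction matters because Baire's theorem needs dense \emph{open} (or at least comeager) sets; a countable intersection of merely dense sets can be empty.

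Fortunately, your own density step already contains the repair. Transitivity hands you, inside each $T_{n}^{-1}(\alpha B_k)\cap V$, a nonempty relatively open subset of $M$; so what you have actually proved is that the relative interior of $G_k$ meets every nonempty relatively open $V\subseteq M$, i.e.\ that $G_k$ contains a \emph{dense relatively open} subset of $M$. Apply Baire to these interiors $\operatorname{int}_M(G_k)$ --- or, as the paper does, to the sets $A_i=\bigcup_j T^{-1}_{n_{i,j}}(\alpha_{i,j}B_i)\cap B_j$ assembled from the open sets supplied by Theorem \ref{T1}(iii) --- and the resulting dense $G_\delta$ sits inside $\bigcap_k G_k = DC(\{T_n\}_{n=1}^{\infty}, M)\cap M$, which gives the corollary. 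With that one correction your argument coincides with the paper's proof.
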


\begin{proof}
Let $\{B_i\}$ be a countable open basis for the relative topology of
$M$ as a subspace of $\mathcal{H}$. By Theorem \ref{T1}, for each
$i, j$, there exist $n_{i, j}\in \mathbb{N}$ and $\alpha_{i, j}
\in\mathbb{C}, | \alpha_{i, j} | \geq 1$ such that the set
$T^{-1}_{n_{i, j}}(\alpha_{i, j}B_i)\cap B_j$ is a nonempty open
subset of $M$. Hence the set $$A_i= \bigcup_j T^{-1}_{n_{i,
j}}(\alpha_{i, j}B_i)\cap B_j$$ is a nonempty, open and dense set in
$M$. By Bair's category theorem
$$\bigcap_i A_i= \bigcap_i\bigcup_j T^{-1}_{n_{i,
j}}(\alpha_{i, j}B_i)\cap B_j$$ remains still dense set in $M$. But
by Theorem \ref{T2}, we know that $$DC(\{T_n\}_{n=1}^{\infty}, M)=
\bigcap_{i} \bigcup_{n} \bigcup_{|\alpha|\geq 1}T^{-1}(\alpha B_i)$$
and the result is obtained.
\end{proof}
\begin{cor}
If $\{T_n\}_{n=1}^{\infty}$ is subspace-disk topologically
transitive for a subspace $M,$ then $\{T_n\}_{n=1}^{\infty}$ is
diskcyclic for $M$.
\end{cor}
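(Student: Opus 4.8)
The plan is to deduce this statement directly from the preceding corollary, whose hypothesis is identical to the one given here. Indeed, the assumption that $\{T_n\}_{n=1}^{\infty}$ is subspace-disk topologically transitive with respect to $M$ is exactly the hypothesis of the corollary immediately above, so the strategy is simply to invoke it.

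First I would apply that corollary to conclude that $DC(\{T_n\}_{n=1}^{\infty}, M)$ is a dense subset of $M$. Since $M$ is assumed nontrivial, it is in particular nonempty, so a subset of $M$ that is dense in $M$ cannot itself be empty; hence $DC(\{T_n\}_{n=1}^{\infty}, M) \neq \emptyset$. Next I would unwind the definition of this set: a vector $x \in DC(\{T_n\}_{n=1}^{\infty}, M)$ is, by definition, one for which the disk-scaled orbit $\{\alpha T_n x : n \in \mathbb{N},\ \alpha \in \mathbb{C},\ 0 < |\alpha| \leq 1\} \cap M$ is dense in $M$. Thus the existence of a single such vector is precisely what it means for $\{T_n\}_{n=1}^{\infty}$ to be subspace-diskcyclic for $M$, and choosing any $x$ from the nonempty set $DC(\{T_n\}_{n=1}^{\infty}, M)$ finishes the argument.

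There is essentially no obstacle here, as the corollary is a formal consequence of the density statement already established. The only points needing attention are bookkeeping ones: confirming that the nontriviality of $M$ forces the dense set to be nonempty, and reading ``diskcyclic for $M$'' in the subspace sense fixed earlier in the paper. Both are immediate, so the entire proof reduces to a one-line appeal to the previous corollary together with the observation that a dense subset of a nonempty space is nonempty.
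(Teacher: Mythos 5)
Your proof is correct: the preceding corollary gives that $DC(\{T_n\}_{n=1}^{\infty}, M)$ is dense in the nontrivial (hence nonempty) subspace $M$, so it is nonempty, and any vector in it is by definition a subspace-diskcyclic vector for $\{T_n\}_{n=1}^{\infty}$ with respect to $M$. The paper actually leaves this proof completely blank, evidently regarding the statement as an immediate consequence of the preceding corollary, so your argument supplies exactly the intended one-line deduction.
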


\begin{proof}
\end{proof}
\begin{thm}
Let $\{T_n\}_{n=1}^{\infty}\subseteq \mathcal{L}(\mathcal{H})$ be a
sequence of linear operators and let $M$ be a nontrivial subspace of
$\mathcal{H}$. Assume that there exist $X$ and $Y$, dense subsets of
$M$ and an increasing sequence of positive integers
$\{n_k\}_{k=1}^{\infty}$ such that
\begin{itemize}
  \item [(i)] $T_{n_k}x\rightarrow 0$ for all $x\in X;$
  \item [(ii)] for any $y\in Y$,there exists a sequence $\{x_k\}$ in
  $M$ such that $x_k\rightarrow 0$ and $T_{n_k}x_k\rightarrow y;$
  \item [(iii)] $T_{n_k} M \subseteq M$ for each $k\in \mathbb{N}.$
\end{itemize}
Then $\{T_n\}_{n=1}^{\infty}$ is subspace-topologically transitive
with respect to $M$ and hence $\{T_n\}_{n=1}^{\infty}$ is
subspace-hypercyclic for $M.$
\end{thm}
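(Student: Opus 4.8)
The plan is to verify subspace topological transitivity directly, by the classical Gethner--Shapiro construction transported into the relative topology of $M$, and then to pass to subspace-hypercyclicity through a Baire category argument exactly parallel to the one used above for the diskcyclic case.

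First I would fix arbitrary nonempty relatively open sets $U,V\subseteq M$. Since $X$ and $Y$ are dense in $M$, I choose $v\in V\cap X$ and $u\in U\cap Y$; both lie in $M$. Hypothesis (i) applied to $v$ gives $T_{n_k}v\to 0$, and hypothesis (ii) applied to $u$ furnishes a sequence $\{x_k\}\subseteq M$ with $x_k\to 0$ and $T_{n_k}x_k\to u$.

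The heart of the argument is to form the witnesses $w_k:=v+x_k$. Each $w_k$ lies in $M$, because both $v$ and $x_k$ do. As $x_k\to 0$, we have $w_k\to v\in V$, so $w_k\in V$ for all large $k$ (here $V$ being relatively open is used). By linearity,
$$T_{n_k}w_k=T_{n_k}v+T_{n_k}x_k\longrightarrow 0+u=u\in U,$$
and hypothesis (iii) guarantees $T_{n_k}w_k\in M$, whence $T_{n_k}w_k\in U$ for all large $k$. Fixing one such $k$ and writing $n=n_k$, we obtain $w_k\in V$ with $T_nw_k\in U$, i.e. $T_n^{-1}(U)\cap V\neq\emptyset$.

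Finally I would upgrade nonemptiness to the relatively-open conclusion and then to subspace-hypercyclicity. Since (iii) makes $T_n|_M:M\to M$ a continuous selfmap of $M$, the set $T_n^{-1}(U)\cap V$ is relatively open in $M$; being nonempty it contains a nonempty relatively open subset of $M$, which is exactly subspace topological transitivity (the same nonempty-versus-open equivalence proved in Theorem~\ref{T1}). Transitivity then yields subspace-hypercyclicity by the Baire category argument used in the Corollary above (compare \cite{MR2720700}). The computations themselves are routine; the only genuine care is the bookkeeping of the relative topology---selecting $v,u$ inside the dense-in-$M$ sets $X,Y$, using that (ii) delivers $\{x_k\}$ inside $M$, and invoking (iii) precisely where $T_{n_k}w_k$ must be kept inside $M$ so that the final intersection is formed within the subspace. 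I expect this subspace-bookkeeping, rather than any estimate, to be the only real obstacle.
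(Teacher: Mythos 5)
Your proof is correct and takes essentially the same route as the paper's: both pick $u\in U\cap Y$ and $v\in V\cap X$, use hypothesis (ii) to produce $\{x_k\}$, form the same witness $v+x_k$, invoke (iii) to keep $T_{n_k}(v+x_k)$ inside $M$, and conclude $T_{n_k}^{-1}(U)\cap V\neq\emptyset$, upgrading to transitivity and then hypercyclicity via the equivalence of Theorem \ref{T1} and the Baire category argument. The only difference is cosmetic: you phrase the estimates as sequential convergence, whereas the paper writes out explicit $\delta/2$ bounds on $M$-balls.
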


\begin{proof}
The sketch  of the proof is well-known and we follow it same as used
in \cite{MR2720700}. Let  $U\subseteq M$ and $V\subseteq M$ be
nonempty open subsets. By Theorem \ref{T2} we have to only show that
there exists $k\in \mathbb{N}$  such that $T^{-1}_{n_k}(U)\cap V$ is
nonempty. Since $X$ and $Y$ are dense in $M$, there exists $u\in
U\cap Y$ and $v\in V\cap X.$ Moreover, one may catch $\delta >0$
such that the $M$-ball centered at $u$ of radius $\delta$, denoted
by $B_M(u, \delta)$, is contained in $U$ and $B_M(v,
\delta)\subseteq V.$ Now by $(ii)$, we can choose $k$ large enough
such that there exists $x_k\in M$ with $$\|T_{n_k}v\|<
\frac{\delta}{2}, \ \ \|x_k\|< \delta \ \ \mbox{and} \ \|T_{n_k}x_k-
u\|<\frac{\delta}{2}.$$ We know that $v+ x_k\in M$ and $$\|v+ x_k
-v\|=\|x_k\|< \delta$$ which follows that $$v+ x_k\in B_M(v,
\delta)\subseteq V.$$ \\In addition, $T_{n_k}$ leaves $M$ invariant,
so $T_{n_k}(v+x_k)\in M$ and
$$\|T_{n_k}(v+x_k)-u\|\leq \|T_{n_k} v\|+\|T_{n_k}x_k - u\|<\frac{\delta}{2}+\frac{\delta}{2}=\delta.$$
It follows that $$T_{n_k}(v+x_k)\in B_M(u, \delta) \subseteq U.$$
Eventually, the above arguments imply that $$v+ x_k\in
T^{-1}_{n_k}(U)\cap V$$ and the result follows.
\end{proof}
\begin{thm}
Let $\{T_n\}_{n=1}^{\infty}\subseteq \mathcal{L}(\mathcal{H})$ be a
sequence of linear operators and let $M$ be a nontrivial subspace of
$\mathcal{H}$. Assume that there exist $X$ and $Y$, dense subsets of
$M$ and an increasing sequence of positive integers
$\{n_k\}_{k=1}^{\infty}$ such that
\begin{itemize}
  \item [(i)] $\alpha T_{n_k}x\rightarrow 0$ for all $x\in X;$
  \item [(ii)] for any $y\in Y$,there exist a sequence $\{x_k\}$ in
  $M$ and $\alpha \in\mathbb{C}, |
\alpha | \leq 1$ such that $x_k\rightarrow 0$ and $\alpha
T_{n_k}x_k\rightarrow y;$
  \item [(iii)] $T_{n_k} M \subseteq M$ for each $k\in \mathbb{N}.$
\end{itemize}
Then $\{T_n\}_{n=1}^{\infty}$ is subspace-disk topologically
transitive with respect to $M$ and hence $\{T_n\}_{n=1}^{\infty}$ is
subspace-diskcyclic for $M.$
\end{thm}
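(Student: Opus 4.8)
The plan is to mirror the proof of the preceding (subspace-hypercyclic) theorem, inserting the disk scalar at the right places. By Theorem \ref{T1}, together with the invariance hypothesis (iii), it suffices to show that for arbitrary nonempty relatively open sets $U, V \subseteq M$ there exist $k \in \mathbb{N}$ and $\alpha \in \mathbb{C}$ with $|\alpha| \geq 1$ such that $T^{-1}_{n_k}(\alpha U) \cap V \neq \emptyset$; the transitivity so obtained then upgrades to subspace-diskcyclicity through the corollaries above.

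First I would fix $U, V$ and use the density of $X$ and $Y$ in $M$ to select $u \in U \cap Y$ and $v \in V \cap X$, then choose $\delta > 0$ so small that $B_M(u, \delta) \subseteq U$ and $B_M(v, \delta) \subseteq V$. Applying hypothesis (ii) to the target $y = u$ produces a scalar $\beta \in \mathbb{C}$ with $|\beta| \leq 1$ and a sequence $x_k \to 0$ in $M$ with $\beta T_{n_k} x_k \to u$; applying hypothesis (i) to $v \in X$ gives $\beta T_{n_k} v \to 0$ (since $|\beta| \leq 1$ this is immediate once $T_{n_k} v \to 0$). Hence for all large $k$ I can simultaneously arrange $\|x_k\| < \delta$, $\|\beta T_{n_k} v\| < \delta/2$ and $\|\beta T_{n_k} x_k - u\| < \delta/2$.

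The candidate witness is $w := v + x_k \in M$. Since $\|w - v\| = \|x_k\| < \delta$, we have $w \in B_M(v, \delta) \subseteq V$. By (iii) the element $T_{n_k} w$ lies in $M$, and because $M$ is a subspace so does $\beta T_{n_k} w$; the triangle inequality gives $\|\beta T_{n_k} w - u\| \leq \|\beta T_{n_k} v\| + \|\beta T_{n_k} x_k - u\| < \delta$, so $\beta T_{n_k} w \in B_M(u, \delta) \subseteq U$. Setting $\alpha := 1/\beta$ (so that $|\alpha| \geq 1$) this reads $T_{n_k} w \in \alpha U$, whence $w \in T^{-1}_{n_k}(\alpha U) \cap V$, as required.

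The only genuinely new point beyond the hypercyclic argument, and the step I expect to need the most care, is the legitimacy of passing to $\alpha = 1/\beta$, i.e. guaranteeing $\beta \neq 0$. This is where I would invoke that $u$ can be taken nonzero (possible because $M$ is nontrivial and $Y$ is dense, so $U \cap Y$ contains a nonzero vector): if $\beta$ were $0$ then $\beta T_{n_k} x_k \equiv 0$ could not converge to $u \neq 0$. With $\beta \neq 0$ secured, the inversion $\alpha = 1/\beta$ is valid and its modulus is at least $1$, matching exactly the requirement in the definition of subspace-disk topological transitivity. Everything else is a verbatim transcription of the previous proof.
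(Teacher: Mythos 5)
Your proof is correct and follows essentially the same route as the paper's: pick $u \in U \cap Y$ and $v \in V \cap X$, use (ii) and (i) to build the perturbation $x_k$ of $v$, and conclude $v + x_k \in T^{-1}_{n_k}\bigl(\tfrac{1}{\beta}U\bigr) \cap V$, invoking Theorem \ref{T1} together with hypothesis (iii) to get transitivity and hence subspace-diskcyclicity. In fact your write-up is slightly more careful than the paper's, which divides by its scalar $\alpha$ without ever checking $\alpha \neq 0$; your observation that $u$ may be taken nonzero (so that $\beta T_{n_k}x_k \to u$ forces $\beta \neq 0$) closes that small gap.
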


\begin{proof}
Let $U$ and $V$ be nonempty relatively open subsets of $M$. By
Theorem \ref{T1}, it is enough to prove that there exist $k\in
\mathbb{N}$ and $\alpha \in\mathbb{C}, | \alpha | \geq 1$ such that
$T^{-1}_{n_k}(\alpha U)\cap V$ is nonempty.  For each $\epsilon
>0$, choose $k$ large enough such that there
exist $x_k\in M$ and $\alpha \in\mathbb{C}, 0<| \alpha | \leq 1$
where
$$\|T_{n_k}x\|< \frac{\epsilon}{2}, \ \ \|x_k\|< \epsilon \ \
\mbox{and} \ \|\alpha T_{n_k}x_k- y\|<\frac{\epsilon}{2}$$ hold for
every $x\in X$ and $y\in Y.$ As mentioned in the proof of the
previous theorem,  $\alpha u\in U\cap Y $ , $ v\in V\cap X$ and
$\delta>0$ are easily found on which
$$B_M(\alpha u, \delta)\subseteq U\ \mbox{and} \ B_M( v,
\delta)\subseteq V.$$ Hence the above inequalities can be rewritten
as follows $$\|T_{n_k}v\|< \frac{\delta}{2|\alpha|}, \ \ \|x_k\|<
\delta \ \ \mbox{and} \ \|\alpha T_{n_k}x_k- \alpha
u\|<\frac{\delta}{2}.$$ But $v+ x_k\in B_M(v, \delta)\subseteq V$
and $T_{n_k}(v+x_k)\in M$, since  $T_{n_k}$ leaves $M$ invariant.
Moreover
$$\|\alpha T_{n_k}(v+x_k)-\alpha u\|\leq \|\alpha T_{n_k} v\|+\|\alpha T_{n_k}x_k - \alpha u\|<
\frac{\delta}{2}+\frac{\delta}{2}=\delta$$ which  follows that
$$\alpha T_{n_k}(v+x_k)\in B_M(\alpha u, \delta) \subseteq U.$$
Therefore $T^{-1}_{n_k}(\frac{1}{\alpha} U)\cap V\neq \emptyset$ and
the proof is complete.
\end{proof}
\begin{thm}
Let $\{T_n\}_{n=1}^{\infty}\subseteq \mathcal{L}(\mathcal{H})$ be a
subspace-diskcyclic sequence of linear operators for a  nontrivial
subspace $M$ of $\mathcal{H}$. Suppose that $...\supseteq
M_3\supseteq M_2 \supseteq M_1\supseteq M$ is an invariant subspace
sequence for $\{T_n\}_{n=1}^{\infty}$ i.e., $T_n  M_n\subseteq M_n$.
Then $\{T_n|_{M_n}\}_{n=1}^{\infty}$ is a subspace-diskcyclic for
$M$.
\end{thm}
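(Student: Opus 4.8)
The plan is to realise the subspace-diskcyclic vector of the restricted sequence as an ordinary subspace-diskcyclic vector for $\{T_n\}_{n=1}^{\infty}$ that happens to lie in the common domain of all the restrictions. Since the chain reads $\cdots \supseteq M_2 \supseteq M_1 \supseteq M$, we have $M_1 = \bigcap_{n=1}^{\infty} M_n$, so a vector on which every $T_n|_{M_n}$ can act must be chosen in $M_1$; and because $M \subseteq M_1$, it will suffice to locate a subspace-diskcyclic vector inside $M$ itself.

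To obtain such a vector I would first upgrade the hypothesis to transitivity. Being subspace-diskcyclic for $M$, the sequence $\{T_n\}_{n=1}^{\infty}$ is subspace-disk topologically transitive with respect to $M$; this is the converse of the implication that transitivity yields diskcyclicity, and is argued exactly as in Lemma \ref{l1}, using the $G_\delta$ description of $DC(\{T_n\}_{n=1}^{\infty}, M)$ from Theorem \ref{T2}. The corollary asserting that transitivity makes $DC(\{T_n\}_{n=1}^{\infty}, M)$ dense in $M$ then applies, so this set is nonempty and I may fix a subspace-diskcyclic vector $x$ with $x \in M$.

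It remains to transfer $x$ to the restricted sequence, which is routine. Since $x \in M \subseteq M_n$ for every $n$ and $T_n M_n \subseteq M_n$, the value is unchanged under restriction, $(T_n|_{M_n})x = T_n x \in M_n$, so the two disk-scaled orbits coincide:
\[
\{\alpha\,(T_n|_{M_n})x : n\in\mathbb{N},\ \alpha\in\mathbb{C},\ 0<|\alpha|\le 1\} = \{\alpha\,T_n x : n\in\mathbb{N},\ \alpha\in\mathbb{C},\ 0<|\alpha|\le 1\}.
\]
Intersecting with $M$ gives the same set on both sides, which is dense in $M$ by the choice of $x$; hence $x \in DC(\{T_n|_{M_n}\}_{n=1}^{\infty}, M)$ and the restricted sequence is subspace-diskcyclic for $M$. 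I expect the only genuine obstacle to be the middle step: the definition merely supplies a diskcyclic vector somewhere in $\mathcal{H}$, so the argument really does need the density of the subspace-diskcyclic vectors inside $M$ (equivalently, that $\{T_n\}_{n=1}^{\infty}$ is subspace-disk topologically transitive). Once a vector is secured in $M \subseteq M_1$, the coincidence of orbits under restriction is immediate from the invariance $T_n M_n \subseteq M_n$, and no further use of $T_n M \subseteq M$ is required.
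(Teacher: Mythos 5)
Your closing step is sound: if a subspace-diskcyclic vector $x$ for $\{T_n\}_{n=1}^{\infty}$ can be found inside $M$, then $x \in M \subseteq M_1 = \bigcap_{n} M_n$, the two disk orbits coincide, and the restricted sequence inherits subspace-diskcyclicity. The gap is precisely the step you yourself flagged as the ``only genuine obstacle'': the claim that subspace-diskcyclicity implies subspace-disk topological transitivity (equivalently, that $DC(\{T_n\}_{n=1}^{\infty}, M)$ is dense in $M$). This is not ``argued exactly as in Lemma \ref{l1}'': the forward direction of Lemma \ref{l1} rests on the density of the set of diskcyclic vectors in the whole space, and nothing of the sort is available here. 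For a sequence of operators there is no semigroup structure, so the orbit points $\alpha T_n x$ of a subspace-diskcyclic vector need not themselves be subspace-diskcyclic, and no mechanism produces even one diskcyclic vector lying in $M$. Worse, by Theorem \ref{T1}, subspace-disk transitivity forces the invariance $T_n M \subseteq M$, which subspace-diskcyclicity does not provide; this is exactly why the paper (and \cite{MR2720700}, in the hypercyclic setting) proves only the implication from transitivity to cyclicity, never the converse.

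In fact the implication you rely on is false, and so is the theorem as stated. Take $M$ nontrivial, a unit vector $e \perp M$, a dense sequence $\{y_k\}$ in $M$, and define $T_k z = \langle z, e \rangle y_k$. Then $e$ is a subspace-diskcyclic vector for $\{T_k\}_{k=1}^{\infty}$ with respect to $M$, since its disk orbit lies in $M$ and contains every $y_k$. But $T_k z = 0$ for every $z \in M$, so no vector of $M$ is subspace-diskcyclic, and $\{T_k\}_{k=1}^{\infty}$ is not subspace-disk transitive (otherwise $DC(\{T_k\}_{k=1}^{\infty}, M)$ would be dense in $M$ by the corollary following Theorem \ref{T1}). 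Now take $M_n = M$ for every $n$, which the hypotheses permit since $T_n M = \{0\} \subseteq M$: the restricted sequence $\{T_n|_{M_n}\}_{n=1}^{\infty}$ is the zero sequence on $M$, which is certainly not subspace-diskcyclic for $M$. Note also that the paper itself leaves the proof of this theorem empty, so there is no argument to compare yours against; but your proposal cannot be repaired without adding a hypothesis that places a diskcyclic vector (or transitivity) inside $M$, and the theorem needs such a hypothesis even to be true.
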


\begin{proof}
\end{proof}
\begin{thm}
Let $\mathcal{H}=M\oplus N$ and $P$ be the projection onto $M$ along
$N$. Let $T_n N\subseteq N$ for each $n\in \mathbb{N}$. If
$\{T_n\}_{n=1}^{\infty}\subseteq \mathcal{L}(\mathcal{H})$ is
subspace-diskcyclic for some $L\subseteq M$, then
$\{PT_n|_{M}\}_{n=1}^{\infty}$ is subspace-diskcyclic for $L$.
\end{thm}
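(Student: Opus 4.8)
The plan is to show that the projection $Px$ of a subspace-diskcyclic vector $x$ of $\{T_n\}_{n=1}^{\infty}$ (with respect to $L$) is itself a subspace-diskcyclic vector for $\{PT_n|_M\}_{n=1}^{\infty}$ with respect to $L$. First I would fix such an $x$ and decompose it as $x = Px + (I-P)x$, with $Px \in M$ and $(I-P)x \in N$, and abbreviate $S_n := PT_n|_M$, which is a bounded operator on $M$ since $P$ is bounded and each $T_n$ is continuous.

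Next I would establish the key identity: for every $n$ and every $\alpha$ with $0 < |\alpha| \leq 1$ such that $\alpha T_n x \in L$, one has $\alpha T_n x = \alpha S_n(Px)$. To see this, note that $T_n N \subseteq N$ gives $T_n(I-P)x \in N$, so that $PT_n(I-P)x = 0$ and hence $PT_n x = PT_n Px = S_n(Px)$. Since $L \subseteq M$ and $P$ restricts to the identity on $M$, any element $\alpha T_n x$ that happens to lie in $L$ satisfies $\alpha T_n x = P(\alpha T_n x) = \alpha PT_n x = \alpha S_n(Px)$, as claimed.

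With this identity in hand, I would conclude by comparing the two disk-scaled orbits. The identity shows
$$\{\alpha T_n x : n \in \mathbb{N},\ 0 < |\alpha| \leq 1\} \cap L \ \subseteq\ \{\alpha S_n(Px) : n \in \mathbb{N},\ 0 < |\alpha| \leq 1\} \cap L,$$
since each element of the left-hand set is produced, with the same $n$ and $\alpha$, as $\alpha S_n(Px)$ and already lies in $L$. The left-hand set is dense in $L$ by the hypothesis that $x$ is subspace-diskcyclic for $\{T_n\}_{n=1}^{\infty}$; as the right-hand set contains it and is itself contained in $L$, it too is dense in $L$. This says exactly that $Px$ is a subspace-diskcyclic vector for $\{PT_n|_M\}_{n=1}^{\infty}$ with respect to $L$.

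I do not expect a genuine obstacle here: the whole argument reduces to a single computation. The only points demanding care are that the hypothesis $L \subseteq M$ is precisely what lets $P$ fix the relevant orbit elements, and that the invariance $T_n N \subseteq N$ is precisely what annihilates the $N$-component under $P$; each hypothesis is used exactly once, and only an inclusion of orbits (not equality) is needed for the density conclusion.
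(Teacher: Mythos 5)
Your proof is correct and follows essentially the same route as the paper's: both rest on the identity $PT_n = PT_nP$ (forced by $T_nN \subseteq N$) and on the observation that any orbit element lying in $L \subseteq M$ is fixed by $P$, giving an inclusion of disk-scaled orbits with the same $n$ and $\alpha$. The only difference is that you take $Px$ as the new diskcyclic vector, so your argument handles an arbitrary diskcyclic vector $x \in \mathcal{H}$, whereas the paper tacitly assumes $x \in L$ (so that $Px = x$) --- a small but genuine gain in precision, since the definition of subspace-diskcyclicity does not require the cyclic vector to lie in $L$.
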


\begin{proof}
Suppose that $\{T_n\}_{n=1}^{\infty}$ is subspace-diskcyclic for
$L\subseteq M$ with diskcyclic vector $x\in L.$ Then
$$\{\alpha T_n x:
n\in \mathbb{N} \ , \  \alpha \in\mathbb{C}, 0<| \alpha | \leq
1\}\cap L $$$$\subseteq P(\{\alpha T_n x: n\in \mathbb{N} \ , \
\alpha \in\mathbb{C}, 0<| \alpha | \leq 1\})\cap L.$$ We have $PT_n
P=PT_n,$ since $N$ is invariant for every $T_n$. This implies that
$$P(\{\alpha T_n x: n\in \mathbb{N} \ , \  \alpha
\in\mathbb{C}, 0<| \alpha | \leq 1\})$$$$=\{\alpha PT_n|_M x: n\in
\mathbb{N} \ , \  \alpha \in\mathbb{C}, 0<| \alpha | \leq 1\}.$$
Therefore $\{\alpha PT_n|_M x: n\in \mathbb{N} \ , \  \alpha
\in\mathbb{C}, 0<| \alpha | \leq 1\}$ is dense in $L$.
\end{proof}
\begin{cor}
Let $M$ be a reducible subspace for any $T_n$. If
$\{T_n\}_{n=1}^{\infty}\subseteq \mathcal{L}(\mathcal{H})$ is
subspace-diskcyclic for some $L\subseteq M$, then
$\{PT_n\}_{n=1}^{\infty}$ is subspace-diskcyclic for $L$.
\end{cor}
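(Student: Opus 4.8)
The plan is to deduce this corollary directly from the preceding theorem. First I would unpack the hypothesis that $M$ is reducible (reducing) for each $T_n$: this means that both $M$ and its orthogonal complement are invariant under every $T_n$. Setting $N = M^\perp$, one obtains an orthogonal decomposition $\mathcal{H} = M \oplus N$ with $T_n N \subseteq N$ for every $n$, and the projection $P$ onto $M$ along $N$ is precisely the orthogonal projection onto $M$. Thus the hypotheses of the previous theorem are met verbatim.

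Applying that theorem, I get that $\{PT_n|_M\}_{n=1}^{\infty}$ is subspace-diskcyclic for $L$; let $x\in L\subseteq M$ be a corresponding subspace-diskcyclic vector. The only gap between the theorem's conclusion and the desired statement is the distinction between $PT_n|_M$ (acting on $M$) and $PT_n$ (acting on all of $\mathcal{H}$). I would close this gap by observing that, because $M$ is reducing, we also have $T_n M\subseteq M$, so for the vector $x\in M$ it follows that $PT_n x = T_n x = (PT_n|_M)\,x$. Consequently the disk-scaled orbit $\{\alpha PT_n x : n\in\mathbb{N},\, |\alpha|\leq 1\}$ coincides with $\{\alpha (PT_n|_M)\,x : n\in\mathbb{N},\, |\alpha|\leq 1\}$, and hence its intersection with $L$ remains dense in $L$. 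Therefore $x$ is a subspace-diskcyclic vector for $\{PT_n\}_{n=1}^{\infty}$ with respect to $L$, as required.

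Since the result is essentially a routine specialization, I do not expect a serious obstacle. The only point requiring care is to make sure that the reducing hypothesis supplies \emph{both} invariances, namely $T_n M\subseteq M$ and $T_n N\subseteq N$, and that $P$ is indeed the orthogonal projection onto $M$. The first invariance lets us identify $PT_n$ with $PT_n|_M$ on the subspace where the diskcyclic vector lives, while the second is exactly the hypothesis $T_n N\subseteq N$ needed to invoke the preceding theorem.
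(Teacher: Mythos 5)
Your proposal is correct and is exactly the derivation the paper intends: the paper actually leaves this proof empty, treating the corollary as an immediate specialization of the preceding theorem with $N=M^{\perp}$ (so that $P$ is the orthogonal projection and $T_nN\subseteq N$ holds), which is precisely what you carry out. One minor inaccuracy: the theorem only guarantees a subspace-diskcyclic vector lying in $M$ (not necessarily in $L$ as you assert), but since your identification of the orbits of $PT_n$ and $PT_n|_{M}$ uses only $x\in M$, the argument is unaffected.
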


\begin{proof}
\end{proof}
\begin{thm}
Let $\mathcal{H}=\bigoplus_n M_n$ and $P_n$ be the projection onto
$M_n$ along $\bigvee_{i\neq n}M_i$. For each $n\in \mathbb{N}$
assume that $T_nM_i\subseteq M_i$ $(i=1, 2, 3, ...)$. If a sequence
$\{T_n\}_{n=1}^{\infty}\subseteq \mathcal{L}(\mathcal{H})$ is
subspace-diskcyclic for some $M_n$, then $\{P_nT_n\}_{n=1}^{\infty}$
is subspace-diskcyclic for $M_n$.
\end{thm}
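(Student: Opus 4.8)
The plan is to imitate the proof of the preceding theorem, the only new ingredient being that the full direct-sum invariance $T_k M_i \subseteq M_i$ (for every $i$ and every $k$) forces the projection onto the chosen summand to commute with each operator in the sequence. Fix the summand $M_n$ for which $\{T_k\}_{k=1}^{\infty}$ is subspace-diskcyclic, write $P = P_n$ for brevity, and let $x$ be a corresponding subspace-diskcyclic vector, so that $\{\alpha T_k x : k \in \mathbb{N},\ 0 < |\alpha| \leq 1\} \cap M_n$ is dense in $M_n$.

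First I would establish the commutation relation $P T_k = T_k P$ for every $k$. Writing an arbitrary $z \in \mathcal{H}$ as $z = \sum_i z_i$ with $z_i \in M_i$, the invariance gives $T_k z = \sum_i T_k z_i$ with each $T_k z_i \in M_i$, whence $P T_k z = T_k z_n = T_k P z$. In particular $P T_k P = P T_k$, which is the exact analogue of the identity $P T_n P = P T_n$ used in the previous theorem, and it guarantees that the compressed sequence $\{P_n T_k\}$ interacts correctly with $M_n$.

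Next I would transfer density through $P$. Since $P$ restricts to the identity on $M_n$, any element $w = \alpha T_k x$ lying in $M_n$ satisfies $w = P w = \alpha P T_k x$; moreover $P$ maps into $M_n$, so $\alpha P T_k x$ automatically lies in $M_n$. Consequently
$$\{\alpha T_k x : k \in \mathbb{N},\ 0 < |\alpha| \leq 1\} \cap M_n \subseteq \{\alpha P T_k x : k \in \mathbb{N},\ 0 < |\alpha| \leq 1\} \cap M_n,$$
and the right-hand set is entirely contained in $M_n$. Because the smaller set is already dense in $M_n$, the larger one is dense in $M_n$ as well, which says precisely that $x$ is a subspace-diskcyclic vector for $\{P_n T_k\}_{k=1}^{\infty}$ with respect to $M_n$.

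The step requiring the most care is the bookkeeping in the index: the symbol $P_n$ must be read as the single fixed projection onto the chosen summand $M_n$, not as a varying family running with the sequence, since otherwise the disk-scaled orbit would meet $M_n$ only in the image of one operator and could never be dense in an infinite-dimensional $M_n$. Beyond this, the only genuinely substantive point is the commutation relation; once it is in place, the density transfer is the same elementary inclusion argument as in the preceding theorem, and no appeal to the subspace-disk transitivity criterion is required.
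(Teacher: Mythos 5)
There is nothing in the paper to compare your argument against: the author leaves the proof environment of this theorem completely empty. Your proof is correct and fills that gap in exactly the way the author presumably intended, namely by rerunning the density-transfer argument from the paper's proof of the preceding theorem (the case $\mathcal{H}=M\oplus N$ with $P$ the projection onto $M$ along $N$, where the paper uses the identity $PT_nP=PT_n$). Your interpretive remark is also well taken and is in fact forced: if $P_n$ were read as a projection varying with the sequence index, then for $k\neq n$ one would have $\alpha P_kT_kx\in M_k$, and since distinct summands meet only in $\{0\}$, the orbit of $\{P_kT_k\}$ could meet the fixed summand $M_n$ only in $\{\alpha P_nT_nx\}\cup\{0\}$, which cannot be dense; so the fixed-projection reading is the only one under which the statement can hold.

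One further observation: your proof is simpler than you advertise. The commutation relation $PT_k=T_kP$, though a faithful analogue of the paper's $PT_nP=PT_n$ step, is never actually invoked in your density transfer. The inclusion
$$\{\alpha T_k x : k\in\mathbb{N},\ 0<|\alpha|\leq 1\}\cap M_n \ \subseteq\ \{\alpha PT_k x : k\in\mathbb{N},\ 0<|\alpha|\leq 1\}\cap M_n$$
uses only that $P$ fixes $M_n$ pointwise: if $w=\alpha T_kx$ lies in $M_n$, then $w=Pw=\alpha PT_kx$ with the same $k$ and $\alpha$. The right-hand side is a subset of $M_n$ containing a dense subset of $M_n$, hence dense, and that is the whole proof. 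Consequently the hypothesis $T_nM_i\subseteq M_i$ (and with it your commutation lemma) is superfluous under your reading: any bounded projection onto $M_n$ yields the conclusion. You may either delete that paragraph or keep it as a faithful echo of the paper's method; either way the argument stands.
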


\begin{proof}
\end{proof}

\nocite{*}
\bibliographystyle{amsplain}

\begin{thebibliography}{99}

\bibitem{MR2533318} Fr´ed´eric Bayart and ´Etienne Matheron, Dynamics of linear
operators, Cambridge Tracts in Mathematics, vol. 179, Cambridge
University Press, Cambridge, 2009.

\bibitem{MR1980114} L. Bernal-Gonz´alez and K.-G. Grosse-Erdmann, The hypercyclicity
criterion for se- quences of operators, Studia Math. 157 (2003), no.
1, 17–32.

\bibitem{MR1148021} Paul S. Bourdon, Invariant manifolds of hypercyclic
vectors, Proc. Amer. Math. Soc. 118 (1993), no. 3, 845–847.

\bibitem{MR2881536} George Costakis and Antonios Manoussos, J-class operators and
hypercyclicity, J. Operator Theory 67 (2012), no. 1, 101–119.

\bibitem{MR1111569} Gilles Godefroy and Joel H. Shapiro, Operators with dense,
invariant, cyclic vector manifolds, J. Funct. Anal. 98 (1991), no.
2, 229–269.

\bibitem{MR1685272} Karl-Goswin Grosse-Erdmann, Universal families and hypercyclic
operators, Bull. Amer. Math. Soc. (N.S.) 36 (1999), no. 3, 345–381.


\bibitem{MR2632793} Carol Kitai, Invariant closed sets for linear operators, ProQuest
LLC, Ann Arbor, MI, 1982, Thesis (Ph.D.)–University of Toronto
(Canada).

\bibitem{MR2261697} Fernando Le´on-Saavedra and Vladim´ýr M¨uller, Hypercyclic
sequences of operators, Studia Math. 175 (2006), no. 1, 1–18.

\bibitem{MR2720700} Blair F. Madore and Rub´en A. Mart´ýnez-Avenda˜no, Subspace
hypercyclicity, J. Math. Anal. Appl. 373 (2011), no. 2, 502–511.

\bibitem{MR2281650} Henrik Petersson, A hypercyclicity criterion with applications, J.
Math. Anal. Appl. 327 (2007), no. 2, 1431–1443.

\bibitem{MR0241956} 11. S. Rolewicz, On orbits of elements, Studia Math. 32 (1969),
17–22.
 \end{thebibliography}

\end{document}